
\documentclass[]{amsart}

\usepackage{enumitem}

\usepackage{geometry}
\geometry{margin=1.2in}
\usepackage[utf8]{inputenc}
\usepackage[T1]{fontenc}
\usepackage{amsmath}
\usepackage{amsthm}
\usepackage{amsfonts}
\usepackage{mathtools}
\usepackage{mathrsfs} 
\usepackage{courier} 
\usepackage{upgreek} 
\usepackage{dsfont}
\usepackage{hyperref} 
\newcommand{\N}{\mathbb{N}}
\newcommand{\Z}{\mathbb{Z}}

\newcommand{\R}{\mathbb{R}}

\newcommand{\RNum}[1]{\uppercase\expandafter{\romannumeral #1\relax}}

\DeclarePairedDelimiterX\lrangle[2]{\langle}{\rangle}{#1, #2}

\newtheorem{theorem}{Theorem}

\numberwithin{theorem}{section}
\newtheorem{lemma}[theorem]{Lemma}
\newtheorem{proposition}[theorem]{Proposition}
\newtheorem{conjecture}[theorem]{Conjecture}
\newtheorem{corollary}[theorem]{Corollary}

\theoremstyle{definition}
\newtheorem{definition}[theorem]{Definition}

\usepackage{tikz}

\begin{document}
\title{Cops and robbers on $2K_2$-free graphs}

\author{J\'er\'emie Turcotte}
\address{D\'{e}partment de math\'{e}matiques et de statistique, Universit\'{e} de Montr\'{e}al, Montr\'eal, Canada}
\email{mail@jeremieturcotte.com}
\urladdr{www.jeremieturcotte.com}

\subjclass[2010]{Primary 05C57; Secondary 05C75, 05C38, 91A43}
\keywords {Cops and robbers, Cop number, Forbidden induced subgraphs, $2K_2$-free graphs, Co-diamond--free graphs}

\maketitle
\noindent

\begin{abstract}
	We prove that the cop number of any $2K_2$-free graph is at most 2, proving a conjecture of Sivaraman and Testa. We also show that the upper bound of $3$ on the cop number of $2K_1+K_2$-free (co-diamond--free) graphs is best possible.
\end{abstract}


\section{Introduction}
	Cops and robbers \cite{nowakowski_vertex--vertex_1983, quilliot_problemes_1978, aigner_game_1984} is a turn-based game opposing a group of cops to a robber on some connected graph $G$. The cops' objective is to capture the robber, whereas the latter attempts to escape indefinitely. The possible positions during the game are the vertices of $G$. On the first turn of the game, starting with the cops, each player picks the vertex where it will start the game from, and then alternate moving. When a cop or the robber is on some vertex $u$, its possible moves are staying on $u$ or moving to a vertex adjacent to $u$ (moving along an edge). The \emph{cop number} $c(G)$ \cite{aigner_game_1984} is the number of cops which is both sufficient and necessary for their victory. We say that $G$ is $k$-cop-win if $c(G)=k$ and that $G$ is $k$-cop-lose if $c(G)>k$.
	
	We define the graphs $P_t$, $C_t$, $K_t$, and $K_{t,r}$ as, respectively, the path on $t$ vertices, the cycle on $t$ vertices, the complete graph on $t$ vertices, and the complete bipartite graph with parts (colour classes) of size $t$ and $r$. If $G_1$ and $G_2$ are graphs, then $G_1+G_2$ is the disjoint union of $G_1$ and $G_2$. For any graph $G$, we define $mG$ as the graph composed of $m$ disjoint copies of $G$, that is $\underbrace{G+\dots+G}_{m \text{ times}}$, and we define $\overline G$ as the complement of $G$. Finally, we say $\N=\{1,2,\dots\}$.
	
	It is frequent in graph theory to consider excluding, or forbidding, some substructures in graphs, most notably induced subgraphs, subgraphs or minors. We will say that say a graph $G$ is $H$-free, $H$-subgraph-free or $H$-minor free if $G$ does not contain, respectively, any induced subgraph, subgraph, or minor which is isomorphic to $H$. One may similarly define graphs which exclude multiple graphs as subgraphs or minors.
	
	There has been a fair amount of research relating the cop number with forbidden subgraphs or minors. In particular, the interest has mostly been on finding constant upper bounds on the cop number (in other words, which do not vary with the order of the graph we are playing on). The first major general result of this type is the following.
	
	\begin{theorem}\cite{andreae_pursuit_1986}
		If $H$ is a graph, then there exists $M_H\in \N$ such that for any $H$-minor-free connected graph $G$ we have $c(G)\leq M_H$.
	\end{theorem}
	
	We assume that $M_H$ is as small as possible; we note that this value is denoted by $\alpha(H)$ in \cite{andreae_pursuit_1986}. With the existence of such a bound proved, one might also be interested in optimizing this value $M_H$ for specific choices of $H$. For instance, it was shown in \cite{andreae_pursuit_1986} that $M_H<|E(H)|$ if $H$ has no isolated vertices and one of its components has at least 2 edges. It is also proved in \cite{andreae_pursuit_1986} that $M_{K_5}=3$ and that $M_{K_{3,3}}=3$,  improving on the result from \cite{aigner_game_1984} that planar graphs have cop number at most 3; Wagner's theorem \cite{wagner_uber_1937} states that the class of planar graphs and the class of $\{K_5,K_{3,3}\}$-minor-free graphs coincide.
	
	Results of this type for $H$-subgraph-free and $H$-free graphs were found in \cite{joret_cops_2010}.
	
	\begin{theorem}\cite{joret_cops_2010}
		If $H$ is a graph and $S_H\in\N\cup \{\infty\}$ is smallest possible such that for any $H$-subgraph-free connected graph $G$ we have $c(G)\leq S_H$, then $S_H<\infty$ if and only if every connected component of $H$ is a tree with at most 3 vertices of degree at most 1.
	\end{theorem}
	
	\begin{theorem}\cite{joret_cops_2010}\label{thmindjoret}
		If $H$ is a graph and $I_H\in \N\cup \{\infty\}$ is smallest possible such that for any $H$-free connected graph $G$ we have $c(G)\leq I_H$, then $I_H<\infty$ if and only if every connected component of $H$ is a path.
	\end{theorem}
	
	Some families with multiple excluded induced subgraphs are discussed, for instance, in \cite{joret_cops_2010,liu_cop_2019,masjoody_cops_2020,sivaraman_cop_2019-1}. In this paper, we will consider specifically the problem of excluding one graph from being an induced subgraph, as in Theorem \ref{thmindjoret}. We want to find to bound (and if possible find the exact value of) $I_H$ from Theorem \ref{thmindjoret}. The simplest case is that of a single forbidden path as induced subgraph, for which the following bound has been proved.

	\begin{theorem}\cite{joret_cops_2010}\label{thmjoretpt}
		If $G$ is a connected $P_t$-free graph $(t\geq 3)$, then $c(G)\leq t-2$.
	\end{theorem}
	
	In other words, we know that $I_{P_t}\leq t-2$. It has been conjectured that this bound can be improved by using one fewer cops when $t\geq 5$.
	
	\begin{conjecture}\cite{sivaraman_application_2019}\label{ptconj}
		If $G$ is a connected $P_t$-free graph $(t \geq 5)$, then $c(G)\leq t-3$.
	\end{conjecture}
	
	An argument proving this statement for the class of $P_5$-free graphs might generalize to the whole conjecture. It has been suggested by Seamone and Hosseini in private communication that one possible approach towards this conjecture is first proving it for $2K_2$-free ($2P_2$-free) graphs, which is a proper subclass of $P_5$-free graphs. This conjecture first appears in \cite{sivaraman_cop_2019}.
 	
 	\begin{conjecture}\cite{sivaraman_cop_2019}\label{2k2conj}
		If $G$ is a connected $2K_2$-free graph, then $c(G)\leq 2$.
	\end{conjecture}
 	
 	The main objective of this paper is to prove this conjecture, in other words to prove that $I_{2K_2}=2$. Some partial results are obtained in \cite{liu_cop_2019,sivaraman_cop_2019}. In particular, hypothetical $2K_2$-free 2-cop-lose graphs have diameter 2 and contain induced cycles of length 3, 4 and 5, as well as an induced subgraph isomorphic to $\overline P_5$ (also called the house graph). However, we will not be using these results in our proof.
 	
 	In Section \ref{codiamondfree}, we consider the class of $2K_1+K_2$-free graphs, also known as $2P_1+P_2$-free graphs, or co-diamond--free graphs (the diamond graph being the graph $K_4$ with one edge removed). It is easy to see that these graphs have cop number at most 3. We will show that this bound is best possible by presenting an infinite family of $2K_1+K_2$-free graphs.


\section{Traps}\label{trapssection}	
	We begin with some basic notation. Let $G$ be a graph and $x\in V(G)$. We denote by $N(x)$ the neighbourhood of $x$ and by $N[x]=N(x)\cup\{x\}$ the closed neighbourhood of $u$. If $S\subseteq V(G)$, then $G-S$ denotes the subgraph of $G$ induced by $V(G)\setminus S$; if $S=\{x\}$, we write $G-x$ for $G-S$. We write that graphs $G_1,G_2$ are isomorphic by $G_1\simeq G_2$.
	
	We can now introduce an important concept which will be central in our proof.
	
\begin{definition}
	Let $G$ be a graph. A vertex $u\in V(G)$ is a \emph{trap} if there exists $x_1,x_2\in V(G)$ (not necessarily distinct) such that $x_1,x_2\neq u$ and $N[u]\subseteq N[x_1]\cup N[x_2]$. We say $u$ is trapped by $x_1,x_2$, or that $x_1,x_2$ trap $u$.
\end{definition}

In other words, a trap gives a winning position for the cops: if the robber is on a trap $u$ and the cops are on the vertices trapping $u$, then the robber cannot escape and will lose at the next turn. A trap is a generalization of the classical definition of a corner (also called an irreducible vertex) in the game with one cop, see \cite{nowakowski_vertex--vertex_1983}. We note that this concept coïncides with the concept of a $2$-trap in \cite{wagner_cops_2015} from which the terminology is inspired. The idea is also implicit in \cite{clarke_characterizations_2012}, in which one would write $u\preceq_1(x_1,x_2)$ to say that $u$ is trapped by $x_1,x_2$. We now define different types of traps.

\begin{definition}
	Let $G$ be a graph, $u\in V(G)$ be a trap and $x_1,x_2\in V(G)$ be a choice of vertices trapping $u$. We say
	\begin{enumerate}
		\item $u$ is a \emph{type-I trap} if exactly one of $x_1,x_2$ is adjacent to $u$ (in particular they are distinct);
		\item $u$ is a \emph{type-II trap} if both $x_1$ and $x_2$ are adjacent to $u$ ($x_1,x_2$ are not necessarily distinct); and
		\item $u$ is a \emph{connected trap} if $x_1,x_2$ are adjacent vertices (in particular they are distinct).
	\end{enumerate}
	
	To lighten the proofs, we will say that $u$ is \emph{c-trapped} by $x_1$ and $x_2$ in the case of a connected trap.
\end{definition}

Note that a trap can simultaneously be any combination of type-I, type-II, connected and not connected, as a vertex may be trapped in multiple ways. On the other hand, every trap must be at least one of type-I or type-II.

	
\section{Finding connected traps}
	The structural properties of $2K_2$-free graphs have been studied in various papers, for example in \cite{chung_maximum_1990}. In this section, we prove the existence of connected traps in such graphs. We start with some well-known remarks about $2K_2$-free graphs, for which we omit the obvious proofs.
	
	\begin{lemma}\label{basicproperties}
		If $G$ is a $2K_2$-free graph, then
		\begin{enumerate}[label=(\alph*)]
			\item only one connected component of $G$ can contain edges;
			\item the diameter of any connected $2K_2$-free graph is at most 3; and
			\item any induced subgraph of $G$ is $2K_2$-free.
		\end{enumerate}
	\end{lemma}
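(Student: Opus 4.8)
The plan is to prove the three parts of Lemma \ref{basicproperties} directly from the definition of $2K_2$-freeness, each by a short contrapositive or distance argument.

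For part (a), I would argue by contradiction: suppose two distinct connected components $C_1$ and $C_2$ each contain an edge, say $e_1=ab$ in $C_1$ and $e_2=cd$ in $C_2$. Since $C_1$ and $C_2$ are distinct components, there are no edges between them, so the four vertices $a,b,c,d$ induce exactly the two disjoint edges $ab$ and $cd$ — that is, an induced $2K_2$. This contradicts $2K_2$-freeness, so at most one component can contain an edge.

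For part (b), let $G$ be connected and suppose for contradiction that its diameter exceeds $3$, so there exist vertices $u,v$ with $d(u,v)\geq 4$. Take a shortest path $u=w_0,w_1,w_2,w_3,w_4,\dots$ of length at least $4$. I would then look at the edges $w_0w_1$ and $w_3w_4$: because the path is geodesic, $w_0$ is non-adjacent to $w_3$ and $w_4$, and $w_1$ is non-adjacent to $w_3$ and $w_4$ (any such chord would shorten the path). Hence $\{w_0,w_1,w_3,w_4\}$ induces a $2K_2$, a contradiction. Therefore the diameter is at most $3$. Part (c) is immediate, since deleting vertices cannot create a new induced $2K_2$: any induced $2K_2$ in a subgraph $H$ of $G$ is also an induced subgraph of $G$, so if $G$ is $2K_2$-free then so is every induced subgraph.

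None of these steps presents a real obstacle — as the excerpt notes, the proofs are routine — so the only care needed is in verifying, for part (b), that the chosen endpoints of the two edges are genuinely pairwise non-adjacent, which follows cleanly from the geodesic property. I would present all three parts compactly, emphasizing the $2K_2$ witness in each of (a) and (b).
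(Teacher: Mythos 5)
Your proofs are correct, and they are precisely the routine arguments the paper has in mind when it says it "omits the obvious proofs" of this lemma: the two edges in distinct components give the $2K_2$ witness for (a), the endpoints of a geodesic of length $\geq 4$ give it for (b), and (c) is the transitivity of the induced-subgraph relation. Nothing to add.
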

	
	The following reformulation of the $2K_2$-free property will be used later to simplify some arguments.
	\begin{lemma}\label{cantmove}
	If $G$ is a $2K_2$-free graph,  $u\in V(G)$, $vw\in E\left(G-N[u]\right)$, then every neighbour of $u$ is adjacent to $v$ or $w$ (or both).
	\end{lemma}
	\begin{proof}
			Suppose to the contrary that there exists a neighbour $x$ of $u$, but not of $v,w$. Then, the edges $ux,vw$ form a $2K_2$.
		\end{proof}
		
	This lemma also yields a direct proof that 3 cops can catch the robber on connected $2K_2$-free graphs, as noted in \cite{sivaraman_cop_2019}. Choose an edge and place a cop on each end of this edge. By the lemma, the robber (who must choose a starting vertex not adjacent to the cops) cannot move, and a third cop can go catch the robber.
	
	\begin{lemma}\label{trap5cycle}
		If $G$ is a connected $2K_2$-free graph, $u\in V(G)$ and $G-N[u]\simeq C_5$, then $G$ contains a connected trap.
	\end{lemma}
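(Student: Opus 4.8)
The plan is to fix the decomposition of the vertex set suggested by the hypothesis and then extract every consequence of the $2K_2$-free condition using Lemma \ref{cantmove}. Write $A = N(u)$ and let $c_1 c_2 c_3 c_4 c_5$ denote the five-cycle $G - N[u]$, with indices read modulo $5$. Then $V(G)$ is the disjoint union of $\{u\}$, $A$ and $C = \{c_1, \dots, c_5\}$, no $c_i$ is adjacent to $u$, and connectedness of $G$ forces $A \neq \emptyset$ with at least one edge between $A$ and $C$. For $a \in A$ I write $S_a = \{i : a \sim c_i\}$ for its set of cycle-neighbours.

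First I would extract the basic adjacency constraint. Applying Lemma \ref{cantmove} to each cycle edge $c_i c_{i+1}$ together with the vertex $u$ (adjacent to neither endpoint) shows that every $a \in A$ is adjacent to $c_i$ or $c_{i+1}$; that is, $S_a$ is a vertex cover of $C_5$, so $|S_a| \geq 3$. Two consequences follow immediately: any two vertices of $A$ share a common cycle-neighbour (two $3$-subsets of a $5$-set must meet), and if $a \not\sim c_i$ then $a \sim c_{i-1}$ and $a \sim c_{i+1}$, since both cycle edges at $c_i$ still need to be covered.

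The heart of the argument will be a second, stronger constraint obtained by applying Lemma \ref{cantmove} to cross edges. For non-adjacent $a, b \in A$ and any $c_i \in S_a$, $c_j \in S_b$ with $\{a, c_i\} \cap \{b, c_j\} = \emptyset$, the edges $a c_i$ and $b c_j$ must not form a $2K_2$; as $a \not\sim b$, one of $a \sim c_j$, $b \sim c_i$, $c_i \sim c_j$ must hold. Taking $c_i \in S_a \setminus S_b$ and $c_j \in S_b \setminus S_a$ kills the first two options and forces $c_i \sim c_j$, so $S_a \setminus S_b$ and $S_b \setminus S_a$ are completely cycle-adjacent. Since $C_5$ contains no $C_4$, this is impossible unless one of these symmetric differences has size at most $1$, which rigidly pins down how the cycle-neighbourhoods of two non-adjacent vertices of $A$ may differ. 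I would use this rigidity to control all $A$-to-$C$ adjacencies.

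With this structure in hand I would locate a connected trap, first trying to c-trap $u$ itself. Since the trapping vertices must dominate $u$, at least one lies in $A$, and because $N[u] = \{u\} \cup A$, c-trapping $u$ amounts to finding an edge $x_1 x_2$ with $x_1 \in A$ whose closed neighbourhoods cover $A$. The clean sub-case is when some $c_i$ is adjacent to all of $A$: then for any $a \in A$ with $a \sim c_i$ the edge $a c_i$ c-traps $u$, as $a$ covers $u$ and $c_i$ covers the rest of $A$; similarly, a dominating edge of $G[A]$ c-traps $u$. I expect the main obstacle to be the remaining case, where $A$ is essentially independent and no single cycle vertex dominates $A$. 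Here I would feed the rigid cross-edge constraints above into a short case analysis, showing either that these dominating configurations cannot in fact be avoided, or that some $c_i$ is itself a connected trap, c-trapped by an edge built from its two cycle-neighbours and a well-chosen vertex of $A$. Checking that no $2K_2$-free configuration escapes all of these cases is the delicate part of the proof.
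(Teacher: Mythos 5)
Your setup matches the paper's: the same decomposition into $u$, $A=N(u)$ and the cycle, and the same first observation that each $a\in A$ must cover every cycle edge, hence $|S_a|\ge 3$. Your cross-edge constraint for non-adjacent $a,b\in A$ is also correct as stated. But the proposal stops exactly where the proof has to be done. You acknowledge this yourself ("checking that no $2K_2$-free configuration escapes all of these cases is the delicate part"), and the remaining case is not a routine cleanup: when $A$ is independent, no single $c_i$ dominates $A$, and no vertex of $A$ has three consecutive cycle-neighbours, you have exhibited no trap, and your non-adjacency constraint is silent about adjacent pairs in $A$, which do occur in the critical configuration. So as written this is a plan with a genuine gap at its core, not a proof.

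The missing idea is the dichotomy the paper exploits. A $3$-element vertex cover of $C_5$ is either three consecutive vertices $\{a_{i-1},a_i,a_{i+1}\}$ (more generally, contains three consecutive vertices) or is of the form $\{a_i,a_{i+2},a_{i+3}\}$. In the first case $a_i$ is c-trapped by $u$ and $v$ (everything in $N[a_i]\cap A$ lies in $N[u]$, and $a_{i\pm1}$ lie in $N[v]$), which is a trap of a cycle vertex by the edge $uv$ — a configuration your proposal, which aims mainly at trapping $u$, does not consider. In the second case, after discarding the easy subcase of two vertices of $A$ with identical cycle-neighbourhoods, one labels the possible vertices of $A$ as $b_1,\dots,b_5$ with $N(b_i)\cap C=\{a_i,a_{i+2},a_{i+3}\}$, shows $b_ib_{i+1}\in E(G)$ by comparing the edges $b_ia_{i+2}$ and $b_{i+1}a_{i+4}$, and then verifies that $u$ is c-trapped by the adjacent pair $b_i,a_i$, since $a_i$ picks up exactly the two $b_j$'s that $b_i$ might miss. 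Without this (or an equivalent) explicit construction, the lemma is not established.
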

	
	\begin{proof}
		Denote $a_1,\dots,a_5$ the vertices of $G-N[u]$, such that $a_i a_{i+1}\in E(G)$ for $1\leq i \leq 5$, working modulo 5.
	
		It is easily seen that any vertex $x\in N(u)$ must be adjacent to at least 3 vertices of the 5-cycle $G-N[u]$ by applying Lemma \ref{cantmove} for each edge $a_ia_{i+1}$.
		
		If $x$ is adjacent to 3 or more consecutive vertices of $G-N[u]$, say $a_{i-1},a_i,a_{i+1}$ for some $1\leq i\leq 5$, then $a_i$ is c-trapped by $u$ and $x$: all vertices in $G$ are dominated by $u$ or $x$, except possibly for $a_{i+2},a_{i+3}$, to which $a_i$ is not adjacent.
		
		Thus, we may now consider that every vertex of $N(u)$ is adjacent to exactly 3 vertices of the 5-cycle $G-N[u]$, only two of which are adjacent: if $x\in N(u)$, then $N(x)\setminus N[u]=\{a_i,a_{i+2},a_{i+3}\}$ for some $1\leq i\leq 5$.
		
		If distinct vertices $x_1,x_2\in N(u)$ have the same neighbours in $G-N[u]$, then $x_1$ is c-trapped by $x_2$ and $u$. Hence, we may now consider that every vertex of $N(u)$ has a distinct neighbourhood in $G-N[u]$. We will denote the \emph{possible} vertices of $N(u)$ as follows:  $N(u)\subseteq\{b_1,\dots,b_5\}$, such that $b_i$ is adjacent to exactly $a_i$, $a_{i+2}$ and $a_{i+3}$ in $G-N[u]$. If $b_i,b_{i+1}\in N(u)$, then $b_ib_{i+1}$ is an edge, as otherwise $b_ia_{i+2},b_{i+1}a_{(i+1)+3}$ would form an induced $2K_2$. This does not exclude that there may be other edges between the $b_i$'s.
		
		Choose a vertex $b_i\in N(u)$ (at least one must exist in order for $G$ to be connected). Then, $u$ is c-trapped by $b_i$ and $a_i$. Indeed, $b_i$ is adjacent to $b_{i+1}$ and $b_{i-1}$ (if they are in the graph), $a_i$ is adjacent to $b_{i+2}$ and $b_{i+3}$ (if they are in the graph), and $a_i$ and $b_i$ are adjacent. This concludes the proof.
	\end{proof}
	
	We are now ready to prove the desired result.
	\begin{proposition}\label{trapexists}
		If $G$ is a connected $2K_2$-free graph, then either
		\begin{enumerate}
			\item $G\simeq K_1$;
			\item $G\simeq K_2$;
			\item $G\simeq C_5$; or
			\item $G$ contains a connected trap.
		\end{enumerate}
	\end{proposition}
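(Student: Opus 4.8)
The plan is to assume that $G$ is none of $K_1$, $K_2$, or $C_5$, and then exhibit a connected trap. The natural strategy is to fix a vertex $u$ and analyze the structure of $G - N[u]$, the graph induced on vertices at distance exactly $2$ or $3$ from $u$ (by Lemma \ref{basicproperties}(b), no vertex is farther). The key insight is that $G - N[u]$ is itself $2K_2$-free (Lemma \ref{basicproperties}(c)), so I would hope to induct or to reduce to a previously handled case. In particular, Lemma \ref{trap5cycle} already settles the situation when $G - N[u] \simeq C_5$, so the main task is to handle all other possibilities for the ``remote'' part of the graph.

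First I would dispose of the easy cases. If some vertex $u$ has $N[u] = V(G)$ — that is, $u$ is universal — then for any neighbour $v$ of $u$, one checks whether $v$ is trapped: in fact a universal vertex typically makes its neighbours easy to c-trap, or forces $G$ to be small. If $G$ has a cut structure, or if $G - N[u]$ is empty or a single vertex, the graph is tightly controlled and one can read off a trap directly or conclude $G \simeq K_1, K_2$. The more interesting regime is when $G - N[u]$ is a nonempty $2K_2$-free graph that is \emph{not} isolated; I would then want to pick $u$ cleverly — for instance, so that $G - N[u]$ is as simple as possible, or so that $u$ lies in a convenient position such as on a shortest path realizing the diameter.

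The heart of the argument is a case analysis on $G - N[u]$. If $G - N[u]$ has no edges, then Lemma \ref{cantmove} becomes very powerful: for any edge $vw$ in $N[u]$, every vertex of $G - N[u]$ has all its neighbours dominated by $v$ or $w$, which should let me trap a vertex of $G - N[u]$ using two adjacent dominators. If $G - N[u]$ does contain an edge, then since it is $2K_2$-free and connected components interact with $u$ in restricted ways, I expect to find a small connected dominating structure (an edge $x_1 x_2$) inside or adjacent to $G - N[u]$ whose closed neighbourhoods cover $N[u']$ for some well-chosen $u'$. The case $G - N[u] \simeq C_5$ is exactly Lemma \ref{trap5cycle}; the remaining connected $2K_2$-free graphs on few vertices should each admit a dominating edge.

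The main obstacle I anticipate is the middle ground: graphs where $G - N[u]$ is a moderately complicated $2K_2$-free graph that is neither edgeless nor a clean $C_5$, so that no single edge obviously dominates a chosen vertex's closed neighbourhood. To control this, I would lean hard on the diameter-at-most-$3$ bound and on repeated application of Lemma \ref{cantmove}, which says that edges in one part force adjacencies in another; the goal is to show that whenever none of the three exceptional graphs occurs, two adjacent vertices can always be positioned to cover some $N[u']$. If a direct structural argument proves unwieldy, the fallback is to choose $u$ minimizing $|G - N[u]|$ (or $u$ of maximum degree) and argue that a graph avoiding all traps forces $G$ itself into one of the three rigid forms, yielding the dichotomy.
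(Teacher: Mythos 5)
Your plan correctly identifies the right decomposition (fix $u$, study $G-N[u]$, invoke Lemma \ref{trap5cycle} when $G-N[u]\simeq C_5$, and dispose of the degenerate cases where $G-N[u]$ is empty or has an isolated vertex), but it has a genuine gap exactly where you yourself flag one: the ``middle ground'' where $G-N[u]$ is a nontrivial $2K_2$-free graph that is neither edgeless nor $C_5$. Your proposed remedies --- leaning on the diameter bound, hoping every remaining case admits a dominating edge, or choosing $u$ to minimize $|G-N[u]|$ --- do not close this case, since $G-N[u]$ can be an arbitrarily large connected $2K_2$-free graph and no single well-chosen $u$ collapses it.

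The missing idea is the induction's lifting step, and it is the reason the statement is phrased in terms of \emph{connected} traps. Apply the inductive hypothesis to $G-N[u]$ (which is connected once isolated vertices are excluded, by Lemma \ref{basicproperties}(a)): it is $K_1$, $K_2$, $C_5$, or contains a connected trap. In the last case, say $v$ is c-trapped in $G-N[u]$ by the adjacent pair $w_1,w_2$. Since $w_1,w_2\in V(G)\setminus N[u]$, the vertex $u$ is not adjacent to either, so Lemma \ref{cantmove} applied to the edge $w_1w_2$ and the vertex $u$ shows that \emph{every} vertex of $N(u)$ is adjacent to $w_1$ or $w_2$. Hence $N[w_1]\cup N[w_2]$ covers all of $N(u)$ in addition to $N[v]\cap(V(G)\setminus N[u])$, so $v$ is still a connected trap in $G$ --- the connectedness of the trap is precisely what makes Lemma \ref{cantmove} applicable. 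The small leftover cases ($G-N[u]\simeq K_1$ is the isolated-vertex case; $G-N[u]\simeq K_2$ requires a separate short analysis that produces either a trap or $G\simeq C_5$) then complete the induction. Without this lifting observation your plan does not terminate, because you have no mechanism for handling a large $G-N[u]$.
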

	
	\begin{proof}
		We proceed by induction. If $|V(G)|\in\{1,2\}$, this is trivially true. Suppose that $|V(G)|\geq 3$ and the statement is true by induction for connected $2K_2$-free graphs $G'$ such that $|V(G')|<|V(G)|$. Let $u$ be any vertex of $G$. Recall that $G-N[u]$ is $2K_2$-free, by Lemma \ref{basicproperties}$(c)$.
		
		If $G-N[u]$ is empty, then $u$ dominates $G$. As $|V(G)|\geq 3$, the vertex $u$ has at least two distinct neighbours $x_1,x_2$. Then, $x_1$ is c-trapped by $u$ and $x_2$.
		
		If $G-N[u]$ contains a connected component which is a single vertex $y$, then $y$ is c-trapped by $u$ and any neighbour of $y$ (which is necessarily in $N(u)$). Otherwise, $G-N[u]$ contains no isolated vertex and by Lemma \ref{basicproperties}(a), $G-N[u]$ is connected. Also, $G-N[u]$ contains more than one vertex.
		
		If $G-N[u]$ is an edge $v_1v_2$: If $v_1$ and $v_2$ have a common neighbour $t$ in $N(u)$, then $v_1$ is c-trapped by $t$ and $u$. Otherwise, $v_1$ and $v_2$ have no common neighbour. Denote by $A$ the neighbours of $v_1$ in $N(u)$ and by $B$ the neighbours of $v_2$ in $N(u)$. By Lemma \ref{cantmove}, $N(u)=A\cup B$. At least one of $A,B$ must be non-empty in order for $G$ to be connected. Without loss of generality, $|A|\geq |B|$. If $|A|=1$ and $|B|=0$, then $G$ is path of length 4, which contains a connected trap. If $|A|=|B|=1$, then we either have that $G\simeq C_5$ (if the vertex in $A$ and the vertex in $B$ are not adjacent) or $G$ contains a connected trap (if the vertex in $A$ and the vertex in $B$ are adjacent, $u$ is a connected trap). Now consider that $|A|>1$ and let $a_1,a_2\in A$ be distinct vertices. As $a_1$ and $a_2$ are both adjacent to $v_1$ but not $v_2$, we have that $a_1$ is c-trapped by $a_2$ and $u$.
		
		The remaining case is that $G-N[u]$ contains at least 3 vertices (and is connected). By the inductive hypothesis, $G-N[u]$ is either a $C_5$ or contains a connected trap. If $G-N[u]\simeq C_5$, then Lemma \ref{trap5cycle} yields that $G$ contains a connected trap. Otherwise, denote by $v$ the vertex of $G-N[u]$ which is a connected trap, and $w_1,w_2$ the vertices trapping $v$ in $G-N[u]$. We know that $w_1,w_2$ together dominate $v$ and all neighbours of $v$ in $G-N[u]$. As $w_1w_2\in E(G)$, they also dominate all vertices in $N(u)$ by Lemma \ref{cantmove}. Hence, $v$ is also a connected trap in $G$.
	\end{proof}
	
	
\section{A winning strategy}
	In this section, we improve the upper bound on the cop number of $2K_2$-free graphs by using the traps found in the previous section.
	
	In general, the fact that a graph contains traps does not necessarily imply that the cops can bring the game to that position. For example, it is shown in \cite{pisantechakool_conjecture_2017} that all planar graphs of order at most 19 contain a trap, but it is still open as to whether 2 cops can win on all planar graphs of order at most 19. Another example is that it is shown in \cite{wagner_cops_2015} that all diameter 2 graphs of order $n$ contain a set of vertices of size at most $\sqrt{n}$ which dominates the neighbourhood of some other vertex (called a $\sqrt{n}$-trap), but it is unknown whether the cop number of these graphs is upper bounded by $\sqrt{n}$ (it is proved to be bounded by $\sqrt{2n}$). In our case however, we will show that containing a trap will give us meaningful information. This is somewhat similar to the equivalence between cop-win and dismantlable graphs, see \cite{nowakowski_vertex--vertex_1983}.
	
	For the remainder of this section, we will denote by $\widehat G$ a minimal (meaning smallest relative to the number of vertices) connected $2K_2$-free 2-cop-lose graph. Our objective is to find a contradiction in order to show that such a graph cannot exist. We first need the following lemma.

\begin{lemma}\label{noisolatedverticesrestricted}
	For any $u\in V(\widehat G)$, the induced subgraph $\widehat G-u$ is connected, and the induced subgraph $\widehat G-N[u]$ is non-empty, connected and contains no isolated vertex.
\end{lemma}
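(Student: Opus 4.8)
The plan is to reduce all four assertions to a single vertex-removal principle and to establish that principle by a shadow (retraction) strategy. Two consequences of minimality will be used repeatedly: since $\hat G$ is $2$-cop-lose we have $c(\hat G)\geq 3$, and since $\hat G$ is a \emph{minimal} such graph, every connected $2K_2$-free graph on fewer vertices than $\hat G$ has cop number at most $2$.

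The core step I would isolate is the following claim: if there are distinct vertices $u,y$ with $N(y)\subseteq N[u]$ and with $\hat G-y$ connected, then $c(\hat G)\leq 2$, contradicting that $\hat G$ is $2$-cop-lose. Since $\hat G-y$ is then a smaller connected $2K_2$-free graph, minimality gives $c(\hat G-y)\leq 2$, and I would lift a winning strategy (with at most $2$ cops) from $\hat G-y$ to $\hat G$ using the map $\rho$ that fixes $\hat G-y$ and sends $y\mapsto u$. This $\rho$ is a homomorphism of reflexive graphs precisely because $N(y)\subseteq N[u]$: an edge $yw$ has $w\in N[u]$, so its image is either the loop at $u$ or the edge $uw$. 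The cops ignore $y$ and play the winning $\hat G-y$ strategy against the \emph{shadow} $\rho(r)$ of the robber $r$; every robber move in $\hat G$ projects to a legal move of the shadow in $\hat G-y$, so in finite time a cop lands on $\rho(r)$. If $r\neq y$ at that moment the robber is captured outright. The only remaining possibility is that the robber sits on $y$ while a cop sits on $u$, which is the delicate point. If $y\sim u$ the cop on $u$ captures immediately. If $y\not\sim u$, then every neighbour of $y$ lies in $N(u)$, so the cop on $u$ already confines the robber to $y$ (any robber move would step into $N(u)$ and be caught); holding that cop fixed, a second cop can walk to a neighbour of $y$ and finish. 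Thus the whole capture uses at most $\max\{2,c(\hat G-y)\}=2$ cops.

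Granting this principle, the four assertions follow quickly. For connectivity of $\hat G-u$: if $\hat G-u$ were disconnected then, as it is $2K_2$-free, Lemma~\ref{basicproperties}(a) forces all but one component to be single vertices, and any such isolated vertex $y$ satisfies $N(y)=\{u\}\subseteq N[u]$ with $\hat G-y$ connected (deletion of a leaf), contradicting the principle. For non-emptiness of $\hat G-N[u]$: if $N[u]=V(\hat G)$ then $u$ is universal and a single cop placed on $u$ captures the robber in one move, giving $c(\hat G)=1$, a contradiction. For the absence of isolated vertices in $\hat G-N[u]$: an isolated vertex $y$ satisfies $y\not\sim u$ and $N(y)\subseteq N(u)\subseteq N[u]$, and $\hat G-y$ is connected since any path through $y$ reroutes through $u$; the principle again yields a contradiction. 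Finally, connectivity of $\hat G-N[u]$ is then automatic: having no isolated vertex, every component of the $2K_2$-free graph $\hat G-N[u]$ contains an edge, so by Lemma~\ref{basicproperties}(a) there is exactly one component.

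I expect the main obstacle to be the lifting argument in the second paragraph, specifically the endgame in which the robber hides on $y$ with a cop on $u$. This is exactly where the weaker hypothesis $N(y)\subseteq N[u]$ (rather than the classical corner condition $N[y]\subseteq N[x]$) must be handled with care: one has to verify both that the shadow of a robber sitting at $y$ remains a legal trajectory in $\hat G-y$ and that, once the $\hat G-y$ game has been won, a single additional cop always suffices to dislodge the robber from $y$ without the two cops having been ``wasted'' elsewhere.
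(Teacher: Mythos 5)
Your proof is correct and follows essentially the same approach as the paper: the shadow (retraction) strategy lifting a winning strategy from $\hat G - y$ to $\hat G$ when $N(y)\subseteq N[u]$, with the endgame of holding a cop on $u$ while the other dislodges the robber from $y$. The only difference is cosmetic: you unify the leaf-removal case and the isolated-vertex-in-$\hat G - N[u]$ case under one principle, whereas the paper dispatches the former by citing the standard fact that deleting a degree-one vertex preserves the cop number.
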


\begin{proof}
	Recall that any induced subgraph of $\widehat G$ is $2K_2$-free, by Lemma \ref{basicproperties}$(c)$. If $\widehat G-u$ is disconnected, then by Lemma \ref{basicproperties}$(a)$, there is a vertex $x$ isolated in $\widehat G-u$. This implies that in $\widehat G$, the only neighbour of $x$ is $u$. It is easily seen that removing a vertex of degree 1 does not change the cop number of a graph nor make it disconnected. This contradicts the minimality of $\widehat G$, as $\widehat G-x$ would be a connected $2K_2$-free 2-cop-lose graph on fewer vertices.

	It is clear $\widehat G-N[u]$ is non-empty, otherwise a single cop on $u$ would catch the robber in 1 turn, contradicting that $\widehat G-N[u]$ is 2-cop-lose. Suppose there exists a vertex $v$ which is isolated in $\widehat G-N[u]$. Then, $v$ is such that all of its neighbours in $\widehat G$ are in $N(u)$. As $\widehat G-v$ is a connected $2K_2$-free graph on fewer vertices than $\widehat G$, there exists a winning strategy for 2 cops on $\widehat G-v$. 
	
	We can then define a winning strategy for 2 cops on $\widehat G$ using the strategy on $\widehat G-v$. We say the robber's shadow is on $u$ whenever the robber is actually on $v$, and for all other positions the robber's shadow is on the same vertex as the robber. Now, as $N(v)\subseteq N(u)$, any move the robber makes yields a valid move for the robber's shadow on $\widehat G-v$. The cops apply the strategy on $\widehat G-v$ to catch the robber's shadow. At the end of this strategy, if the robber is not caught, then necessarily the robber is on $v$ and a cop is on $u$. This cop stays on $u$, and the robber on $v$ cannot move. The other cop may then go capture the robber, contradicting that $\widehat G$ is 2-cop-lose. This is a well known argument, see \cite[Theorem 1]{nowakowski_vertex--vertex_1983} and \cite[Theorems 3.1 and 3.2]{berarducci_cop_1993} for more general versions.
	
	Since $\widehat G-N[u]$ contains no isolated vertex, it is connected by Lemma \ref{basicproperties}$(a)$.
\end{proof}

Even if 2 cops cannot win on $\hat G$, we now show that the cops have great power as to making the robber move to a given position. To some extent we will be able to assume that we can place both the cops and the robber.

\begin{lemma}\label{placingplayers}
	If $u\in V(\widehat G)$ and $vw\in E(\widehat G-N[u])$, then there exists a strategy, playing with 2 cops, ensuring that the cops are on $v,w$ and the robber is on $u$ and cannot move.
\end{lemma}
\begin{proof}
	We first wish to force the robber to move to $u$. By Lemma \ref{noisolatedverticesrestricted}, $\widehat G-u$ is connected, and by Lemma \ref{basicproperties}$(c)$ it is $2K_2$-free. Hence, by the minimality of $\widehat G$, we know that $\widehat G-u$ has cop number at most 2. As long as the robber is not on $u$, the cops copy the strategy for $\widehat G-u$ on $\widehat G$. If the robber never moves to $u$, the robber will eventually be caught, hence the robber has no choice but to eventually move to $u$. Denote $z_1$ and $z_2$ the positions of the cops at that point, we know that $z_1,z_2\notin N[u]$, as otherwise the cops could capture the robber one turn later, a contradiction as $\widehat G$ is 2-cop-lose.
	
	We now wish to bring the two cops to the ends of any edge in $\widehat G-N[u]$, while keeping the robber on $u$. If $z_1=z_2$, one of the cops moves to a neighbour of $z_1$ in $\widehat G-N[u]$, which must exist as $\widehat G-N[u]$ is not a unique vertex by Lemma \ref{noisolatedverticesrestricted}. If $z_1z_2\in E(\widehat G)$, then they are already in the desired position. If $z_1$ and $z_2$ have a common neighbour $z$ in $\widehat G-N[u]$, we move the cop on $z_2$ to $z$. If not, by Lemma \ref{noisolatedverticesrestricted}, $\widehat G-N[u]$ is connected and, by Lemma \ref{basicproperties}(b), $z_1$ and $z_2$ are at distance 3 in $\widehat G-N[u]$: there exists $z_1',z_2'$ such that $z_1z_1'z_2'z_2$ is a path contained in $\widehat G-N[u]$. We move the cop on $z_1$ to $z_1'$ and the cop on $z_2$ to $z_2'$. Now that the cops are on adjacent vertices, both not in $N[u]$, then by Lemma \ref{cantmove}, the robber cannot move.
	
	We now wish to bring the cops to the ends of the edge $vw$, while keeping the robber on $u$. We will do so by never leaving $\widehat G-N[u]$ and always keeping the cops on adjacent vertices, which guarantees that the robber will never be able to move. Suppose the cops are now on the edge $ab$. Let $P$ be a path completely contained in $\widehat G-N[u]$ starting with the edge $ab$ and ending with the edge $vw$, which exists as $\widehat G-N[u]$ is connected. The cops move along $P$ one behind the other. This concludes the proof.
\end{proof}

In Section \ref{trapssection}, we defined type-I and type-II traps. Using the strategy we developed in the last lemma, we will be able to exclude these from $\widehat G$.

\begin{lemma}\label{notypeI}
	$\widehat G$ does not contain a type-I trap.
\end{lemma}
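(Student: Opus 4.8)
The plan is to assume, for contradiction, that $\hat G$ contains a type-I trap and then to exhibit a winning strategy for two cops, contradicting that $\hat G$ is 2-cop-lose. So I fix a type-I trap $u$ with trapping vertices $x_1,x_2$; by the definition of a type-I trap I may assume (relabelling if needed) that $x_1$ is adjacent to $u$ while $x_2$ is not, so that $x_2\in V(\hat G)\setminus N[u]$, and $N[u]\subseteq N[x_1]\cup N[x_2]$. The key observation is that once the cops occupy $x_1$ and $x_2$ with the robber on $u$, the robber is lost: every vertex of $N[u]$, hence every vertex the robber can reach (including $u$ itself), lies in $N[x_1]\cup N[x_2]$, so the robber is captured on the next cop move. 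The vehicle for reaching such a configuration is Lemma \ref{placingplayers}, which lets me freeze the robber on $u$ while parking the two cops on any chosen edge of $\hat G-N[u]$.

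The step I expect to be the main obstacle is that $x_1\in N(u)$ lies outside $\hat G-N[u]$, so Lemma \ref{placingplayers} cannot drop a cop directly onto $x_1$; instead I must manufacture a single cop move from a frozen edge of $\hat G-N[u]$ onto the pair $\{x_1,x_2\}$ without ever handing the robber a safe escape. To prepare this, I first note that $x_2\in \hat G-N[u]$, which by Lemma \ref{noisolatedverticesrestricted} is non-empty and has no isolated vertex; hence $x_2$ has a neighbour $q$ inside $\hat G-N[u]$, and in particular $q\notin N[u]$. I then invoke Lemma \ref{placingplayers} on the edge $qx_2$ and the vertex $u$ (legitimate since neither $q$ nor $x_2$ is adjacent to $u$), arriving at a position with the cops on $q$ and $x_2$ and the robber frozen on $u$.

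Finally I convert this into cops on $\{x_1,x_2\}$ in a single move, splitting on whether $q$ is adjacent to $x_1$. If it is, I move the cop from $q$ to $x_1$ and keep the other cop on $x_2$. If it is not, I claim $x_1$ and $x_2$ must be adjacent: otherwise $x_2$ would be adjacent to neither endpoint of the edge $ux_1$, so Lemma \ref{cantmove} would force every neighbour of $x_2$ — in particular $q$ — to be adjacent to $u$ or $x_1$, and since $q$ is not adjacent to $u$ this would give $q$ adjacent to $x_1$, a contradiction; so $x_1x_2\in E(\hat G)$ and I move the cop from $x_2$ to $x_1$ and the cop from $q$ to $x_2$. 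In both cases the cops now sit on $\{x_1,x_2\}$ while the robber, having been unable to move, is still on $u$; as $N[u]\subseteq N[x_1]\cup N[x_2]$ the robber is captured at the next cop turn, contradicting that $\hat G$ is 2-cop-lose. Hence $\hat G$ has no type-I trap. Every step besides the last-move engineering is routine bookkeeping built on Lemmas \ref{placingplayers}, \ref{noisolatedverticesrestricted} and \ref{cantmove}.
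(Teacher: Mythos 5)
Your proposal is correct and follows essentially the same route as the paper's own proof: freeze the robber on $u$ with cops on an edge $qx_2$ of $\hat G-N[u]$ via Lemma \ref{placingplayers}, then shift onto $\{x_1,x_2\}$ in one move by casing on whether $qx_1$ is an edge, using Lemma \ref{cantmove} to get $x_1x_2\in E(\hat G)$ in the other case. The only difference is notational (your $q$ is the paper's $y$), and you spell out the Lemma \ref{cantmove} application slightly more explicitly.
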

\begin{proof}
	Suppose to the contrary that there exists a type-I trap $u$. We will use this trap to construct a winning strategy for $2$ cops on $\widehat G$.
	
	Let $x,v$ be the vertices trapping $u$, with $x$ adjacent to $u$ and $v$ in $\widehat G-N[u]$. Let $w$ be any neighbour of $v$ in $\widehat G-N[u]$, which exists as $\widehat G-N[u]$ contains no isolated vertex by Lemma \ref{noisolatedverticesrestricted}. Using Lemma \ref{placingplayers}, place the cops on $v$ and $w$, and the robber on $u$.
	
	If $wx$ is an edge, then move the cop on $w$ to $x$ and keep the other cop on $v$. If $wx$ is not an edge, then $xv$ is an edge by Lemma \ref{cantmove}. Move the cop on $v$ to $x$ and the cop on $w$ to $v$.
	
	In both cases, the robber is now on $u$ with the cops on $x,v$: the robber is caught at the next move. This is a contradiction as $\widehat G$ is 2-cop-lose.
\end{proof}

Before considering the case of type-II traps, we need the following proposition from \cite{chung_maximum_1990}. We prove it here in order for this paper to be self-contained.
\begin{proposition}\cite{chung_maximum_1990}\label{chungbipartite}
	If $G$ is a connected bipartite $2K_2$-free graph, then each colour class of $G$ contains a vertex adjacent to all vertices of the other colour class of $G$.
\end{proposition}
\begin{proof}
	Denote $A,B$ the colour classes of $G$. Choose $m\in A$ of highest degree. Suppose there exists $b\in B$ such that $mb$ is not an edge. As $G$ is connected, there exists $a\in A$ such that $ab$ in an edge. Now, for every neighbour $x\in N(m)$ (necessarily, $x\in B$), we compare edges $ab$ and $mx$: the $2K_2$-free property yields that $ax$ is an edge. Thus, $|N(a)|>|N(m)|$, since $N(m)\subseteq N(a)$ and $b\in N(a)\setminus N(m)$, which contradicts that $m$ has highest degree in $A$.
\end{proof}

\begin{lemma}\label{typeIIimpliestypeI}
	If $\widehat G$ contains a type-II trap, then $\widehat G$ contains a type-I trap.
\end{lemma}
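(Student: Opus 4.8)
The plan is to show that the given type-II trap can always be ``upgraded'' to a type-I trap at the \emph{same} vertex $u$. Let $u$ be type-II trapped by $x_1,x_2$, so $x_1,x_2\in N(u)$ and $N[u]\subseteq N[x_1]\cup N[x_2]$, and write $B=\hat G-N[u]$, which by lemmas \ref{basicproperties} and \ref{noisolatedverticesrestricted} is non-empty, connected, $2K_2$-free, and has no isolated vertex. The key observation is that if I can find a vertex $z\in B$ with $N(u)\setminus N[x_1]\subseteq N(z)$, then $N[u]\subseteq N[x_1]\cup N[z]$ (the vertex $u$ itself being covered by $x_1$), so $x_1,z$ trap $u$; since $z\in B$ is non-adjacent to $u$ while $x_1$ is adjacent to $u$, this exhibits $u$ as a type-I trap, and the symmetric statement with $x_2$ in place of $x_1$ works as well. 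The degenerate case $N(u)\subseteq N[x_1]$ is immediate, since pairing $x_1$ with any vertex of $B$ already yields a type-I trap; so I may assume $T_1:=N(u)\setminus N[x_1]$ and $T_2:=N(u)\setminus N[x_2]$ are both non-empty, and note $T_1\subseteq N(x_2)$ and $T_2\subseteq N(x_1)$.

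Before the main argument I would record that no vertex of $B$ is at distance $3$ from $u$. Indeed, such a vertex $b$ would have all its neighbours inside $B$ and none in $N(u)$, so applying lemma \ref{cantmove} to the edges $ux_1$ and $ux_2$ (with the vertex $b$, which is adjacent to neither endpoint) forces $N(b)\subseteq N(x_1)\cap N(x_2)$; but then $b$ is isolated in $\hat G-N[x_1]$, contradicting lemma \ref{noisolatedverticesrestricted}. Hence every vertex of $B$ lies at distance exactly $2$ from $u$, and is thus adjacent to some vertex of $N(u)$; dually, applying lemma \ref{cantmove} to an edge of $B$ together with the vertex $u$ shows every vertex of $N(u)$, in particular every vertex of $T_1$, has a neighbour in $B$. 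It therefore remains to locate, in this ``diameter-two'' configuration, a single vertex of $B$ adjacent to every vertex of $T_1$.

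This last step is where I expect the real difficulty to lie, and it is exactly where lemma \ref{chungbipartite} should enter. The aim is to extract from $\hat G$ a \emph{connected bipartite $2K_2$-free} graph having $T_1$ (or a subset of $N(u)$ dominating-equivalent to it) as one colour class and a subset of $B$ as the other, so that lemma \ref{chungbipartite} returns a vertex of the $B$-class adjacent to all of $T_1$. The obstruction is that $\hat G$ carries edges \emph{inside} $N(u)$ and \emph{inside} $B$, so the naive cross-edge graph need not be $2K_2$-free; the crux is to select genuinely independent classes, controlling the within-class edges via the $2K_2$-free property and the domination of $N[u]$ by $x_1,x_2$, and to verify connectivity, so that the hypotheses of lemma \ref{chungbipartite} are genuinely met. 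Once such a vertex $z\in B$ is produced, the reduction of the first paragraph immediately yields the desired type-I trap.
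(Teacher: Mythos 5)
Your proposal is incomplete at exactly the step that carries the whole weight of the lemma. You correctly reduce the problem to producing a vertex $z\in \hat G-N[u]$ with $T_1=N(u)\setminus N[x_1]\subseteq N(z)$, and your preliminary observations (the degenerate case $N(u)\subseteq N[x_1]$, the fact that no vertex of $\hat G-N[u]$ is at distance $3$ from $u$, and that every vertex of $T_1$ has a neighbour outside $N[u]$) are all sound. But you then state that the remaining step is ``where the real difficulty lies'' and only describe what a successful application of lemma \ref{chungbipartite} \emph{would} look like, explicitly flagging the obstruction (edges inside $N(u)$ and inside $\hat G-N[u]$ ruin the bipartite $2K_2$-free hypotheses) without resolving it. No bipartite subgraph is constructed, so the existence of $z$ is never established, and the proof does not close.

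The paper's proof shows why this gap is unlikely to be bridged by purely structural means: it invokes the hypothesis that $\hat G$ is a \emph{minimal 2-cop-lose} graph through lemma \ref{placingplayers}, twice, to derive structure. First, it shows every neighbour of $x_1$ (or $x_2$) in $\hat G-N[u]$ must be adjacent to the other of $x_1,x_2$ --- otherwise two cops could be parked on an edge of $\hat G-N[u]$ with the robber pinned on $u$ and then slide onto $x_1,x_2$ to win. This partitions $\hat G-N[u]$ into $A$ (adjacent to both $x_1,x_2$) and $B$ (adjacent to neither); a second application of lemma \ref{placingplayers} shows $A$ is stable, and a direct $2K_2$ argument shows $B$ is stable, which is precisely what makes $\hat G-N[u]$ a connected bipartite $2K_2$-free graph to which lemma \ref{chungbipartite} applies. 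Note also that the paper's conclusion is a type-I trap at $x_1$ (trapped by $u$ and a vertex $b\in B$ dominating $A$), not at $u$ as you aim for; it is not clear that your target vertex $z$ dominating $T_1$ even exists in general. To repair your argument you would need to import the game-theoretic input from lemma \ref{placingplayers}, at which point you are essentially reconstructing the paper's proof with a harder endgame.
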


\begin{proof}
	Let $x_1$,$x_2$ be the vertices trapping a vertex $u$ such that $x_1$ and $x_2$ are both adjacent to $u$. We can suppose $x_1$ and $x_2$ are distinct, as if $N[u]\subseteq N[x_1]$, then simply pick $x_2$ to be any other neighbour of $u$ (which must exist as otherwise $\widehat G-x_1$ is disconnected, contradicting Lemma \ref{noisolatedverticesrestricted}).
	
	Suppose $w$ is a neighbour of $x_1$ in $\widehat G-N[u]$, we wish to prove $w$ is adjacent to $x_2$. Suppose $w$ is not adjacent to $x_2$, then denote by $v$ any neighbour of $w$ in $\widehat G-N[u]$, which exists as $\widehat G-N[u]$ contains no isolated vertex by Lemma \ref{noisolatedverticesrestricted}. Then, $v$ must adjacent to $x_2$ by Lemma \ref{cantmove}. Playing with 2 cops, place the cops on $w$ and $v$ and the robber on $u$ using Lemma \ref{placingplayers}. Then, move the cop on $w$ to $x_1$ and the cop on $v$ to $x_2$. The robber will be caught one turn later, which is a contradiction as $\widehat G$ is 2-cop-lose. Thus, $w$ must be adjacent to $x_2$.
	
	By applying this reasoning for every neighbour of $x_1$ and of $x_2$ in $\widehat G-N[u]$, we find that every vertex of $\widehat G-N[u]$ is either adjacent to both $x_1$ and $x_2$, or to neither. We can thus partition $V(\widehat G)\setminus N[u]$ into the sets $A=\{v\in V(\widehat G)\setminus N[u] : zx_1, zx_2\in E(\widehat G)\}$ and $B=\{v\in V(\widehat G)\setminus N[u] : zx_1, zx_2\notin E(\widehat G)\}$.
	
	If there is an edge between 2 vertices in $B$, comparing this edge with $ux_1$ yields an induced $2K_2$, and thus $B$ is a stable set. If there is an edge between two vertices in $A$, then, playing with 2 cops, place the cops on the ends of this edge and the robber on $u$, using Lemma \ref{placingplayers}, and then move the cops to $x_1$ and $x_2$, yielding a contradiction as $\widehat G$ is 2-cop-lose. Thus, $\widehat G-N[u]$ is a (connected, by Lemma \ref{noisolatedverticesrestricted}) bipartite graph. Note that $B$ is non-empty as $A$ is a stable set and $\widehat G-N[u]$ contains no isolated vertex.
	
	By Proposition \ref{chungbipartite}, there exists a vertex $b$ in $B$ adjacent to every vertex of $A$. Every neighbour of $x_1$ in $N[u]$ is (by definition) either $u$ or adjacent to $u$, and every neighbour of $x_1$ in $\widehat G-N[u]$ is adjacent to $b$. Furthermore, $x_1b\notin E(\widehat G)$. Thus, $x_1$ is a type-I trap, trapped by $u$ and $b$.
\end{proof}

We are now ready to prove Conjecture \ref{ptconj}.

\begin{theorem}\label{maintheorem}
	If $G$ is a connected $2K_2$-free graph, then $c(G)\leq 2$.
\end{theorem}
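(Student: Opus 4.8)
The plan is to prove the theorem by contradiction: assume a 2-cop-lose connected $2K_2$-free graph exists, pass to a minimal one $\hat G$, and derive a contradiction by showing $\hat G$ must contain a trap that the cops can exploit. The entire machinery of Sections 2–4 has been built precisely for this purpose, so the final proof should be short and consist of assembling the pieces in the right order.

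Here is how I would carry it out. First I would observe that $\hat G$ cannot be $K_1$, $K_2$, or $C_5$, since all three are trivially 2-cop-win (indeed 1-cop-win for the first two, and $C_5$ is easily seen to be caught by two cops). By Proposition~\ref{trapexists}, the only remaining possibility is that $\hat G$ contains a connected trap $u$, say c-trapped by adjacent vertices $x_1, x_2$. The key point is that a connected trap, by definition, has both trapping vertices adjacent to each other, so $x_1 \ne x_2$ and $x_1 x_2 \in E(\hat G)$; I would then split into cases according to how many of $x_1, x_2$ are adjacent to $u$. If at most one of them is adjacent to $u$, then $u$ is a type-I trap, contradicting Lemma~\ref{notypeI}. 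If both are adjacent to $u$, then $u$ is a type-II trap, so by Lemma~\ref{typeIIimpliestypeI} the graph $\hat G$ contains a type-I trap, again contradicting Lemma~\ref{notypeI}.

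The remaining subtlety is the degenerate possibility that one of $x_1, x_2$ equals $u$ or that the trap fails to fall cleanly into type-I or type-II. Since a connected trap requires $x_1, x_2 \ne u$ and $x_1 x_2 \in E(\hat G)$, every vertex of $N[u]$ is dominated by two vertices distinct from $u$, and each of $x_1, x_2$ is either adjacent to $u$ or not; so the count of trapping vertices adjacent to $u$ is $0$, $1$, or $2$, and these three cases are covered by "type-I (at most one adjacent)" and "type-II (both adjacent)". I would note that the case where neither $x_1$ nor $x_2$ is adjacent to $u$ still qualifies as type-I in the sense of Lemma~\ref{notypeI}'s proof, provided one reads the hypothesis there carefully; if the paper's type-I definition strictly requires \emph{exactly} one neighbour, I would handle the zero-neighbour case separately by noting it contradicts that $u$ has a neighbour dominated only through $u$ itself, or simply observe it cannot arise for a genuine trap on a graph with no isolated vertices.

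The main obstacle, if any, is purely bookkeeping: ensuring the case analysis on the connected trap $u$ is exhaustive and that each case lands in exactly one of the two preceding lemmas. There is no new combinatorial or game-theoretic content to discover here — all the hard work (forcing robber placement via Lemma~\ref{placingplayers}, excluding type-I traps, reducing type-II to type-I via the bipartite structure of Lemma~\ref{chungbipartite}) has already been done. Consequently I expect the proof to read essentially as: ``By Proposition~\ref{trapexists}, $\hat G$ contains a connected trap $u$; this trap is of type-I or type-II; both possibilities contradict Lemma~\ref{notypeI} (directly, or via Lemma~\ref{typeIIimpliestypeI}); hence no such $\hat G$ exists, proving $c(G) \le 2$ for all connected $2K_2$-free $G$.''
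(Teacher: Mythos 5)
Your proposal is correct and matches the paper's proof essentially verbatim: both take a minimal counterexample $\hat G$, use Lemmas \ref{notypeI} and \ref{typeIIimpliestypeI} to rule out all traps, and then invoke Proposition \ref{trapexists} to conclude $\hat G$ is $K_1$, $K_2$ or $C_5$, each 2-cop-win. The degenerate zero-neighbour case you worry about cannot occur, since $u\in N[u]$ must itself be dominated by $x_1$ or $x_2$, forcing at least one of them to be adjacent to $u$.
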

	
	\begin{proof}
		Let $\widehat G$ be a minimal counter-example. Lemmas \ref{notypeI} and \ref{typeIIimpliestypeI} imply that $\widehat G$ does not contain any trap, hence does not contain any connected trap. Thus, by Proposition \ref{trapexists}, $\widehat G$ is isomorphic to either $K_1$, $K_2$ or $C_5$, all of which have cop number at most 2. Hence, no minimal counter-example to the statement exists, which proves the statement.
	\end{proof}
	
	We note that this result is best possible (in other words $I_{2K_2}=2$), as there are an infinite number of $2K_2$-free graphs with cop number 2. It is easily seen that complete multipartite graphs are $2K_2$-free, and that such graphs have cop number 2 if each colour class (each independent set) has size at least 2.
	
	The more general question of the cop number of $mK_2$-free graphs ($m\geq2$) is raised in \cite{sivaraman_cop_2019}. One easily notices that $2m-1$ cops can win, as noted in \cite{sivaraman_cop_2019}: place $2$ cops on the ends of an edge $uv$ and apply induction to the $(m-1)K_2$-free graph $G-N[u]\cup N[v]$, with the base case being $2K_2$-free graphs. Having improved by 1 the bound on the cop number of $2K_2$-free graphs, we can also improve by 1 the bound on the cop number of $mK_2$-free graphs.
	
	\begin{corollary}\label{rk2free}
		If $G$ is a connected $mK_2$-free graph $(m\geq 2)$, then $c(G)\leq 2m-2$.
	\end{corollary}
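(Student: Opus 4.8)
The plan is to reduce the $rK_2$-free case to the $2K_2$-free case by induction on $r$, using the main theorem as the base case $r=2$. The corollary asserts $c(G) \le 2r-2$ for connected $rK_2$-free graphs, and the mention of the naive bound $2r-1$ (obtained by pinning down an edge with two cops, which forces the robber into an $(r-1)K_2$-free situation, then recursing) suggests that the improvement by one cop comes precisely from saving a cop at the final stage, where the $2K_2$-free theorem lets two cops do the job of the three the naive argument would allocate.

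First I would set up the inductive step. Suppose $G$ is connected and $rK_2$-free with $r \ge 3$, and assume the result holds for $r-1$. The idea is to deploy two cops on the endpoints $v,w$ of a chosen edge. By the analogue of lemma \ref{cantmove} for $rK_2$-free graphs, any vertex $u$ not dominated by $\{v,w\}$, together with one of its neighbours $x$, would form an edge $ux$ disjoint from $vw$; iterating this reasoning, the portion of the graph the robber can safely occupy and move within, once two cops sit on an edge, is itself $(r-1)K_2$-free. More precisely, I would restrict attention to the subgraph induced on the vertices the robber can reach while avoiding $N[v]\cup N[w]$, and argue this induced subgraph is $(r-1)K_2$-free: any $(r-1)K_2$ in it, together with the edge $vw$, would give an induced $rK_2$ in $G$, a contradiction. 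Then the remaining $2r-4$ cops catch the robber on this smaller graph by induction, giving a total of $2 + (2r-4) = 2r-2$.

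The main obstacle, and the step requiring the most care, is making the "shadow'' / reduction argument precise: I must ensure that once two cops guard the edge $vw$, the robber is genuinely confined to a single connected $(r-1)K_2$-free piece on which the inductive cop strategy applies, and that the cops guarding $vw$ never need to move (so they truly remain "spent''). I would handle the connectivity subtlety by appealing to the structure theorem: the analogue of lemma \ref{basicproperties}$(a)$ guarantees at most one component carries edges, so after fixing $v,w$ the robber's safe territory is effectively one connected component together with possible isolated vertices, and isolated vertices in the robber's territory are immediate losses for the robber. Care is also needed at the base of the recursion to invoke theorem \ref{maintheorem} rather than the weaker three-cop bound, which is exactly where the saving of one cop originates.

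I expect the bookkeeping of cop counts to be routine once the confinement lemma is established, so the essential content is the $2K_2$-free theorem plus a clean statement that guarding an edge drops the relevant forbidden-subgraph parameter by one. I would therefore state and prove a short auxiliary claim: if two cops occupy the ends of an edge $vw$ in a connected $rK_2$-free graph, then the subgraph available to the robber is $(r-1)K_2$-free, and then feed this directly into the induction.
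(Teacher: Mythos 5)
Your proposal is correct and is essentially the paper's own proof: induct on $r$ with Theorem \ref{maintheorem} as the base case, park two cops on an edge $vw$ who move only to capture, observe that the subgraph induced on $V(G)\setminus(N[v]\cup N[w])$ is $(r-1)K_2$-free, confine the robber to its connected component there, and finish with the remaining $2r-4$ cops by induction. The one slip is your appeal to an analogue of Lemma \ref{basicproperties}(a), which fails for $(r-1)K_2$-free graphs once $r\geq 4$ (several components may carry edges); it is also unnecessary, since restricting to the connected component containing the robber, as the paper does, already yields a connected $(r-1)K_2$-free graph on which the inductive hypothesis applies.
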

	
	Applying the same type of argument with base case Theorem \ref{thmjoretpt} yields the following general result as formulated in \cite{liu_cop_2019}.
	
	\begin{theorem}\cite{liu_cop_2019}
		If $G$ is a $P_{i_1}+\dots+P_{i_k}$-free graph $(k,i_1,\dots,i_k\in \N)$ and there is at least one index $i_j\geq 3$ $(1\leq j\leq k)$, then $c(G)\leq i_1+\dots+i_k-2$.
	\end{theorem}
	
	Using Theorem \ref{maintheorem}, we can then loosen the restriction on the indices as follows.
	
	\begin{corollary}\label{generalsum}
		If $G$ is a $P_{i_1}+\dots+P_{i_k}$-free graph $(k,i_1,\dots,i_k\in \N)$ and there is either
		\begin{enumerate}
			\item at least one index $i_j\geq 3$ $(1\leq j\leq k)$, or
			\item there are two indices $i_j,i_{j'}=2$ $(1\leq j<j'\leq k)$,
		\end{enumerate}
		then $c(G)\leq i_1+\dots+i_k-2$.
	\end{corollary}
	

\section{Co-diamond--free graphs}	\label{codiamondfree}
	The statement of Corollary \ref{generalsum} naturally leads us to asking whether the restriction on the indices can be further loosened by only requiring one of the indices to be at least 2. We are thus interested in the case of $mP_1+P_2$-free (or $mK_1+K_2$-free) graphs.
	
	The statement does not generalize for the case $m=1$. It is well-known \cite{de_ridder_et_al_cop_3--free_nodate} and easy to see that the $P_1+P_2$-free graphs (or $\overline P_3$-free graphs) are exactly the complete multipartite graphs. As noted earlier, the cop number of such graphs is 2 when each colour class has size at least 2.
	
	In this section, we will consider the case $m=2$: $2K_1+K_2$-free (or co-diamond--free graphs).
	\begin{proposition}\label{uppbound2k1k2}
		If $G$ is a connected $2K_1+K_2$-free graph, then $c(G)\leq 3$.
	\end{proposition}
	\begin{proof}
		The proof is completely analogous to the one for $2K_2$-free graphs. If $G$ is complete, then $c(G)=1$. Otherwise, there exists non-adjacent vertices $v,w$; place one cop on each of these vertices. The robber must choose a vertex $u$ adjacent to neither $v$ nor $w$. Since $G$ is $2K_1+K_2$-free, all neighbours or $u$ are adjacent to $v$ or $w$, hence the robber cannot move. A third cop can then go capture the robber.
	\end{proof}
	
	It would be natural to attempt proving that the cop number of these graphs is at most $2$ using a proof similar to the one for $2K_2$-free graphs, but replacing connected traps by \emph{disconnected trap}, that is a trap for which the trapping vertices are not adjacent (and distinct). Although Lemmas \ref{notypeI} and \ref{typeIIimpliestypeI}	 appear to have analogues for $2K_1+K_2$-free graphs (we note this is using some ideas from \cite[Lemma 4.4]{baird_minimum_2014}), no inductive strategy to find traps similar to Lemma \ref{trapexists} can work. Indeed, if $u$ is a vertex of a $2K_1+K_2$-free graph $G$, then $G-N[u]$ is a $K_1+K_2$-free graph, so necessarily complete multipartite graph. If each colour class has size at least 3, it does not contain any disconnected trap. Graphs for which this holds for every vertex $u$ (if such graphs exist) are good candidates to have cop number 3. A computer search using the \texttt{geng} program from the \texttt{nauty Traces} package \cite{mckay_practical_2014} (using the \texttt{PRUNE} feature to generate $2K_1+K_2$-free graphs), \texttt{Mathematica} \cite{wolfram_research_inc_mathematica_nodate} and two implementations of a cop number algorithm \cite{afanassiev_cop-number_2017,turcotte_code_2020}, we were able to find a $2K_1+K_2$-free graph requiring 3 cops, shown in Figure \ref{fig:3copexample}.
	
	\begin{figure}[h]
	\begin{tikzpicture}[scale=1.4, dot/.style = {circle, fill, minimum size=#1,
					inner sep=0pt, outer sep=0pt}, dot/.default = 5pt
		]

		\node [dot] (1) at (0,0) {};
		\node [dot] (2) at (0,1) {};
		\node [dot] (3) at (0,2) {};
		\node [dot] (4) at (0,3) {};
		\node [dot] (5) at (1,3) {};
		\node [dot] (6) at (1,2) {};
		\node [dot] (7) at (1,1) {};
		\node [dot] (8) at (1,0) {};
		\node [dot] (9) at (2,0) {};
		\node [dot] (10) at (2,1) {};
		\node [dot] (11) at (2,2) {};
		\node [dot] (12) at (2,3) {};
		\node [dot] (13) at (3,3) {};
		\node [dot] (14) at (3,2) {};
		\node [dot] (15) at (3,1) {};
		\node [dot] (16) at (3,0) {};
		
		\draw (1) to (5);
		\draw (1) to (6);
		\draw (1) to (7);
		\draw (1) to (10);
		\draw (1) to (11);
		\draw (1) to (12);
		\draw (1) to (13);
 		\draw (1) to (14);
 		\draw (1) to (15);
		\draw (2) to (5);
		\draw (2) to (6);
		\draw (2) to (8);
		\draw (2) to (9);
		\draw (2) to (11);
		\draw (2) to (12);
		\draw (2) to (13);
		\draw (2) to (14);
		\draw (2) to (16);
		\draw (3) to (5);
		\draw (3) to (7);
		\draw (3) to (8);
		\draw (3) to (9);
		\draw (3) to (10);
		\draw (3) to (12);
		\draw (3) to (13);
		\draw (3) to (15);
		\draw (3) to (16);
		\draw (4) to (6);
		\draw (4) to (7);
		\draw (4) to (8);
		\draw (4) to (9);
		\draw (4) to (10);
		\draw (4) to (11);
		\draw (4) to (14);
		\draw (4) to (15);
		\draw (4) to (16);
		\draw (5) to (9);
		\draw (5) to (10);
		\draw (5) to (11);
		\draw (5) to (14);
		\draw (5) to (15);
		\draw (5) to (16);
		\draw (6) to (9);
		\draw (6) to (10);
		\draw (6) to (12);
		\draw (6) to (13);
		\draw (6) to (15);
		\draw (6) to (16);
		\draw (7) to (9);
		\draw (7) to (11);
		\draw (7) to (12);
		\draw (7) to (13);
		\draw (7) to (14);
		\draw (7) to (16);
		\draw (8) to (10);
		\draw (8) to (11);
		\draw (8) to (12);
		\draw (8) to (13);
		\draw (8) to (14);
		\draw (8) to (15);
		\draw (9) to (13);
		\draw (9) to (14);
		\draw (9) to (15);
		\draw (10) to (13);
		\draw (10) to (14);
		\draw (10) to (16);
		\draw (11) to (13);
		\draw (11) to (15);
		\draw (11) to (16);
		\draw (12) to (14);
		\draw (12) to (15);
		\draw (12) to (16);
	\end{tikzpicture}
	\caption{$K_4\times K_4$: All pairs of vertices are adjacent except when aligned vertically or horizontally. Some edges overlap in the drawing.}
	\label{fig:3copexample}
\end{figure}

We can in fact generalize this example to an infinite and well-known class of graphs. We will need some definitions. We write $[n]=\{1,\dots,n\}$. Let $G_1,G_2$ be graphs. We define their \emph{categorical product} (or \emph{tensor product}) as the graph with vertex set $V(G_1)\times V(G_2)$ and for which $(u_1,u_2),(v_1,v_2)\in V(G_1)\times V(G_2)$ are adjacent if $u_1v_1\in E(G_1)$ and $u_2v_2\in E(G_2)$. The cop number of products of graphs (categorical product as well as cartesian and strong products) is in particular studied in \cite{neufeld_game_1998}.

We are interested in graphs of the form $K_a\times K_b$, which are known as the complement of Rook's graphs (see \cite{weisstein_rook_nodate,wikipedia_rooks_nodate,wikipedia_tensor_nodate}, for instance). These graphs are easily visualized in $\R^2$ as the points (vertices) $[a]\times[b]$ where all vertices are pairwise adjacent except when they lie on a line parallel to one of the axes (in other words, they differ in both coordinates). It can also be seen as the Cayley graph of the group $\left(\Z/a\Z\times \Z/b\Z,+\right)$ with the generating set $$S=\{(z_1,z_2)\in \Z/a\Z\times \Z/b\Z:z_1,z_2\neq 0\}.$$

Let us note that for every vertex $u$ of $G\simeq K_a\times K_b$, we have that $G-N[u]$ is a bipartite graph with colour classes of sizes $a-1$ and $b-1$.
	
\begin{theorem}
	If $G\simeq K_a\times K_b$ for some $a,b\in \N$, then
	\begin{enumerate}
		\item $G$ is $2K_1+K_2$-free; and
		\item \cite[Theorem 3.2]{neufeld_game_1998} if $a,b\geq 4$, then $c(G)=3$.
	\end{enumerate}
\end{theorem}
\begin{proof}\item 
	\begin{enumerate}
		\item We work using the embedding of the graph in $\R^2$ discussed above. Suppose the contrary, say that $x_1,x_2,x_3,x_4$ are distinct vertices all pairwise non-adjacent except for the edge $x_3x_4$. As $x_1$ and $x_2$ are not adjacent, there exists a line $L$ containing $x_1$ and $x_2$ which is parallel to one of the axes. Similarly, let $L'$ be the line containing $x_1$ and $x_3$. If $L\neq L'$, then $L$ and $L'$ are perpendicular, one being horizontal and one vertical. Then, the line between $x_2$ and $x_3$ is diagonal (formally, it has non-zero and non-infinite slope), which is a contradiction since these vertices are not adjacent. Hence, $L$ contains $x_3$. Analogously, $L$ contains $x_4$. This would imply that $x_3$ and $x_4$ are non-adjacent, which is a contradiction.
		\item We prove this well-known statement here in order for this paper to be self-contained. The fact that $c(G)\leq 3$ follows from Proposition \ref{uppbound2k1k2} and part (1) of the statement. We show that if there are only 2 cops the robber always has an escape from the cops (and by a same argument, that there is a safe starting position for the robber), hence that $c(G)\geq 3$.
		
		Without loss of generality, suppose $G=G_1\times G_2$, where $G_1\simeq K_a, G_2\simeq K_b$. Suppose the cops are on vertices $v=(v_1,v_2),w=(w_1,w_2)$ and the robber is on $u=(u_1,u_2)$. If neither $v$ nor $w$ are adjacent to $u$, the robber can stay still. Otherwise, without loss of generality $v$ is adjacent to $u$.
		
		Suppose $v,w$ differ in at most 1 coordinate, without loss of generality in the first coordinate (in other words $v_2=w_2$). Let $x\in V(G_1)\setminus \{u_1,v_1,w_1\}$, which exists since $|V(G_1)|=n_1\geq 4$. The robber can move to $r=(x,v_2)$ as $r$ and $u$ differ in both coordinates: for the first coordinate by our choice of $x$, and in the second coordinate since $u$ is adjacent to either $v$ or $w$. Since $r$ differs only in one coordinate from $v$ and $w$, the robber is now safe.
		
		Suppose $v,w$ differ in both coordinates. We would like to move the robber to either $(v_1,w_2)$ or $(w_1,v_2)$, which are the (only) vertices not adjacent to either $v$ or $w$. If $u$ is not adjacent to $(v_1,w_2)$ nor $(w_1,v_2)$, then $u_1=v_1$ or $u_2=w_2$, and $u_1=w_1$ or $u_2=v_2$. In other words, the only vertices adjacent to neither $(v_1,w_2)$ or $(w_1,v_2)$ (apart from themselves) are exactly $v,w$, and so the robber can move safely to one of these vertices.
		
		In all cases, the robber can win against 2 cops, which completes the proof.
	\end{enumerate}
\end{proof}

	
\section{Further directions}	
	Containing a winning position (in our case, a trap), is a necessary condition for the cops' victory, but it is in general not sufficient. It was however crucial in the proof of our main result to first show that this necessary condition held. We believe that this approach could be useful to bound the cop number of other graph classes, at least when they are very structured. The following is a list of open problems in this direction, more specifically for graphs with forbidden induced subgraphs.
	
	\begin{enumerate}
		\item It would be interesting to see if the approach used to prove Theorem \ref{maintheorem} can be used to improve the bound on the cop number of $P_5$-free graphs, and even possibly to prove Conjecture \ref{ptconj}, which was our initial motivation.
		\item Can we further improve the bound on the cop number of $mK_2$-free graphs ($m\geq 2$) or is Corollary \ref{rk2free} best possible?
		\item Can the cop number of $mK_1+K_2$-free graphs ($m\geq 3$) be as large as $m+1$ (as in the cases $m=1,2$)?
		\item Denote by $\gamma(G)$ the domination number of $G$ (the size of the smallest dominating set in $G$) and by $\alpha(G)$ the independence number of $G$ (the size of the largest stable set in $G$). It is easily seen and well known that $c(G)\leq \gamma(G)\leq \alpha(G)$. What can we say about the cop number of $mK_1$-free graphs ($m\geq 4$), in other words graphs such that $\alpha(G)\leq m-1$? Can we improve the upper bound to $c(G)\leq m-2$ in order to further generalize Corollary \ref{generalsum}? If not, what can we say about graphs for which $c(G)=\gamma(G)=\alpha(G)$?
	\end{enumerate}	
	
	We note that improving these upper bounds also has a practical application. Computing the cop number of a graph is quite costly in computation time; currently the best algorithm \cite{clarke_characterizations_2012} to determine whether an $n$-vertex graph has cop number at most $k$ has complexity $O\left(n^{2k+2}\right)$. It may then be useful to first verify if there is some easy to compute parameter which directly yields a bound on the cop number. Verifying whether the graph is $H$-free, for small graphs $H$ such as $2K_2$, can be one of these tests. There is some interest in the community for improving practical computation of the cop number (even without improving the worst case complexity of the algorithm); the author has discussed the topic with various people in the past and this application was also suggested by one of the anonymous reviewers.

\section*{Acknowledgments}
	I thank Seyyed Aliasghar Hosseini and Ben Seamone for introducing me to this problem and for their helpful comments and suggestions, including the suggestion of adding Corollary \ref{rk2free} and the idea of considering minimality. I am very grateful to Samuel Yvon for our discussions, which have inspired me greatly. I thank Ge\v na Hahn and Simon St-Amant for their comments on the paper. I express my gratitude to Aliz\'ee Gagnon for noticing an oversight in a previous version of this paper. Finally, I thank the anonymous reviewers for their suggestions for this paper.

	The author is supported by the Natural Sciences and Engineering Research Council of Canada (NSERC) and the Fonds de Recherche du Qu\'ebec -- Nature et technologies (FRQNT). L'auteur est supporté par le Conseil de recherches en sciences naturelles et en génie du Canada (CRSNG) et le Fonds de Recherche du Québec -- Nature et technologies (FRQNT).
	
\bibliography{refs.bib}
\bibliographystyle{abbrvurl}

\end{document}